\begin{document}
\title{There are Infinitely Many Perrin Pseudoprimes}
\author {Jon Grantham}
\address{Institute for Defense Analyses \\
Center for Computing Sciences\\
17100 Science Drive, Bowie, MD 20715, United States}
\ead{grantham@super.org}
\begin{keyword}
pseudoprime; Hecke L-function; Perrin pseudoprime; Frobenius
pseudoprime
\MSC 11Y11 \sep 11N13 \sep  11N25
\end{keyword}

\newtheorem{theorem}{Theorem}[section]
\newtheorem*{definition}{Definition}
\newtheorem*{definitions}{Definitions}
\newtheorem{proposition}[theorem]{Proposition}
\newtheorem{lemma}[theorem]{Lemma}

\begin{abstract}
This paper proves the existence of infinitely many Perrin pseudoprimes,
as conjectured by Adams and Shanks in 1982.  The theorem proven covers a 
general class of pseudoprimes based on recurrence sequences.  The result uses ingredients of the proof of the infinitude
of Carmichael numbers, along with zero-density estimates for Hecke
L-functions.
\end{abstract}
\def\gcmd{\operatorname{gcmd}}
\newcommand{\disc}{\operatorname{disc}}
\maketitle

\section{Introduction}

The search for fast primality tests has led to the examination of the generalization of the Fermat probable prime test: $n$ is a probable prime if $2^{n-1}\equiv 1 \bmod n$.  This test,
and its generalizations, requires $O(\log{n})$ multiplications. If such a generalization could be found with a finite list of exceptions (pseudoprimes), we would have a
primality test which runs deterministically in time $\operatorname{\tilde O}(\log^2{n})$.    (Recall that  $\operatorname{\tilde O}$ is an extension to the $\operatorname{O}$ notation that ignores factors that are bounded by a fixed power of the logarithm.)  By contrast, the Agrawal-Kayal-Saxena test \cite{aks} has recently been improved to $\operatorname{\tilde O}(\log^6{n})$ \cite{lenpom2}. Non-deterministic variants of the AKS test \cite{avanzi}, \cite{bernstein} have running time of $\operatorname{\tilde O}(\log^4{n})$; the same can be achieved heuristically for the ECPP test \cite{morain}.  Although ECPP is the fastest method in practice, it is not proven to be in (random) polynomial time.

The Fermat test can be generalized in many ways, which fall into two broad categories.  By thinking of it in terms of the first-order recurrence sequence defined by $a_{n+1}=2a_n$, $a_0=1$, we can generalize to congruences on higher-order recurrence sequences.  This approach is more traditional.  Alternatively, one can think of the Fermat criterion as the extent to which the ring of
integers mod $n$ resembles a finite field.  In that way, we can generalize to higher-degree finite fields.  The latter approach was favored in the author's dissertation \cite{dissme}, Chapter 4 of which contained an earlier version of the results of this paper.

In a 1982 paper \cite{as}, Adams and Shanks introduced a probable primality test based on third-order recurrence sequences, which they called the Perrin test. They asked if there are infinitely many Perrin pseudoprimes.
They answered the question,
``Almost certainly yes, but we cannot prove it.  Almost certainly, there
are infinitely many [Carmichael numbers which are Perrin pseudoprimes],
and yet it has never been proved that there are infinitely many Carmichael 
numbers.''

Carmichael numbers are composites which satisfy $a^{n-1} \equiv 1 \bmod n$ for all $(a,n)=1$.  The Carmichael question has been resolved \cite{agp}. 
The techniques of that proof can be combined with results about Hecke L-functions to
show that there are infinitely many Perrin pseudoprimes.  In fact, the main result of this
paper applies to a more general class of pseudoprimes, including Lucas and Lehmer
pseudoprimes.

\section{Background}
The following is a version of the Perrin test.

Consider sequences $A_n=A_n(r,s)$ defined by the following
relations:
$A_{-1}=s$, $A_0=3$, $A_1=r$, and $A_n=rA_{n-1}-sA_{n-2}+A_{n-3}$.
Let $f(x)=x^3-rx^2+sx-1$ be the associated polynomial and $\Delta$ its 
discriminant.
(Perrin's sequence is $A_n(0,-1)$.)

\begin{definition}
The {\bf signature mod $m$} of an integer $n$ with respect to the
sequence
$A_k(r,s)$ is the $6$-tuple
$(A_{-n-1},A_{-n},A_{-n+1},A_{n-1},A_n,A_{n+1}) \bmod m.$
\end{definition}

\begin{definitions}
An integer $n$ is said to have an {\bf S-signature} if its signature
mod $n$
is congruent to $(A_{-2},A_{-1},A_0,A_0,A_1,A_2)$.

An integer $n$ is said to have a {\bf Q-signature} if its signature
  mod $n$ is congruent to $(A,s,B,B,r,C)$, where for some integer $a$
with $f(a)\equiv 0 \bmod n$, $A\equiv a^{-2}+2a$, $B\equiv
-ra^2+(r^2-s)a$, and $C\equiv a^2+2a^{-1}$.

An integer $n$ is said to have an {\bf I-signature} if its signature
mod $n$ is congruent to $(r,s,D',D,r,s)$, where $D'+D\equiv rs-3$ mod
$n$ and $(D'-D)^2\equiv \Delta$.
\end{definitions}

\begin{definition}
A {\bf Perrin pseudoprime} with parameters $(r,s)$ is an odd composite
$n$ such that either

1) $\left(\frac{\Delta}n \right)=1$ and $n$ has an S-signature or an 
I-signature, or

2) $\left(\frac{\Delta}n \right)=-1$ and $n$ has a Q-signature.
\end{definition}

The concept of Perrin pseudoprime can be generalized \cite{meone} to that of a Frobenius pseudoprime.  Briefly,
a Frobenius pseudoprime with respect to $f(x)$ is a composite for which $\mathbb{Z}[x]/(n,f(x))$ exhibits properties
similar to that of a true finite field.  Most pseudoprime tests based on recurrence sequences can be treated as special cases.

\begin{definition}
Let $f(x)\in\mathbb{Z}[x]$ be a monic polynomial of degree $d$
with discriminant $\Delta$.   
An odd composite $n>1$ is said to be a {\bf Frobenius pseudoprime} with respect to $f(x)$ if $(n,f(0)\Delta)=1$,
and it is declared to be a probable prime by the following algorithm.
All computations are done in $(\mathbb{Z}/n\mathbb{Z})[x]$.

{\bf Factorization step} Let $f_0(x)=f(x) \bmod n$.
For $1\le i\le d$, let $F_i(x)=\gcmd(x^{n^i}-x,f_{i-1}(x))$ and
$f_i(x)=f_{i-1}(x)/F_i(x)$.
If any of the $\gcmd$s fail to exist, declare $n$ to be composite and stop.
If $f_d(x)\not=1$, declare $n$ to be composite and stop.

{\bf Frobenius step}  For $2\le i\le d$, compute $F_i(x^n) \bmod F_i(x)$.  If it is nonzero for some $i$, 
declare $n$ to be composite and stop.

{\bf Jacobi step} Let $S=\sum_{2|i}deg(F_i(x))/i$.

If $(-1)^S\not=\left(\Delta\over n\right)$,
declare $n$ to be composite and stop.

\vskip.05in

If $n$ has not been declared composite, declare $n$ to be a Frobenius probable prime.

(The $gcmd$ of two polynomials is the greatest common monic divisor; see \cite{meone} for a full treatment.)
\end{definition}

Haddad \cite{haddad} has shown that cubic variant of this test has running times which track well with asymptotics when implemented with Arjen Lenstra's Large Integer Package.

The following general result gives the infinitude of Perrin pseudoprimes as a corollary. 

\begin{theorem}\label{theorem:1}
Let $f(x)\in\mathbb{Z}[x]$
be a monic, squarefree polynomial with splitting field $K$.  There are
infinitely many Frobenius pseudoprimes with respect to $f(x)$.
In fact, there are $\gg N^c$ Carmichael-Frobenius numbers with respect
to $K$ which are less than $N$, for
some $c=c(K)>0$.
\end{theorem}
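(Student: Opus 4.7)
I would adapt the Alford-Granville-Pomerance (AGP) construction of Carmichael numbers, with zero-density estimates for Hecke L-functions of $K$ as the main analytic input. The starting point is a \emph{Carmichael-Frobenius criterion}: if $n = p_1 \cdots p_k$ is squarefree and each prime $p_i$ (i) splits completely in $K$, (ii) is coprime to $f(0)\Delta$, and (iii) satisfies $p_i - 1 \mid n - 1$, then $n$ passes the Frobenius test for \emph{every} monic squarefree $f \in \mathbb{Z}[x]$ whose splitting field is $K$. Indeed, (i) makes $(\mathbb{Z}/p_i)[x]/(f) \cong \mathbb{F}_{p_i}^d$, so (iii) forces $\alpha^n \equiv \alpha$ in that ring; this yields $F_1 = f$ and $F_i = 1$ for $i \geq 2$ in the factorization step, makes the Frobenius step vacuous, and gives the Jacobi step because $S = 0$ matches $(\Delta/n) = 1$ (a consequence of (i) and multiplicativity). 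It thus suffices to produce $\gg N^c$ such \emph{Carmichael-Frobenius numbers with respect to $K$} below $N$.

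Following AGP, I would fix parameters $y$, $L$, $x$ with $L$ a highly-divisible smooth number (a variant of $\operatorname{lcm}\{1,\ldots,y\}$), and consider
\[
\mathcal{P}(L,x,K) = \{p \leq x : p \text{ splits completely in } K,\ (p-1)\mid L,\ p \nmid L\cdot\disc(K)\}.
\]
The first task is a lower bound $|\mathcal{P}(L,x,K)| \gg x^{1-\eta}$ for some small $\eta > 0$. By Chebotarev in $K(\zeta_d)$, primes that split completely in $K$ and satisfy $p \equiv 1 \pmod d$ have density $1/[K(\zeta_d):\mathbb{Q}]$, and one sums these densities over $d \mid L$. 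Making this unconditional and uniform in $d$ up to a small power of $x$ is where zero-density estimates enter: they supply a Bombieri-Vinogradov-type bound for Hecke characters of $K$ twisted by Dirichlet characters mod $d$, in the style of Fogels and Huxley.

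With $|\mathcal{P}(L,x,K)|$ in hand, I would apply the Erdős-Kruyswijk-Olson bound on zero-sum-free sequences in the finite abelian group $G = (\mathbb{Z}/L)^*$ to the multiset of residues $\{p \bmod L : p \in \mathcal{P}\}$. This produces exponentially many subsets $S \subset \mathcal{P}$ of controlled size with $\prod_{p \in S} p \equiv 1 \pmod L$. Each such $S$ gives a squarefree $n = \prod_{p \in S} p$ satisfying $p-1 \mid L \mid n-1$ for every $p \mid n$, hence Carmichael-Frobenius with respect to $K$. A final bookkeeping calibration of $x$ and the permissible subset sizes (copied essentially verbatim from AGP) ensures that distinct subsets yield distinct $n \leq N$ and that their count is $\gg N^c$.

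The main obstacle will be the uniform Bombieri-Vinogradov-type statement in the second step: one needs a level of distribution $x^\theta$ for some fixed $\theta > 0$ while simultaneously imposing the Chebotarev condition (splitting in $K$) and the arithmetic-progression condition ($p \equiv 1 \pmod d$) over all $d \mid L$. This is precisely what zero-density estimates for the Hecke L-functions of ray-class characters of $K$ provide, and is the paper's new analytic ingredient; once it is available, the combinatorial and counting steps run in parallel with AGP.
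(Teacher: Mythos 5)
Your proposal follows the same route as the paper: reduce Theorem 2.1 to producing $\gg N^c$ Carmichael numbers whose prime factors all split completely in $K$ (the paper invokes Proposition 6.1 of Grantham's earlier paper here; you re-derive the criterion, correctly), then obtain a uniform estimate for primes that split completely in $K$ and lie in prescribed residue classes using zero-density estimates for Hecke $L$-functions (the paper's Theorems 3.3--3.4, proved via Fogels/Hilano zero-density bounds and a Landau--Page lemma controlling a single possible exceptional modulus), and finally run the AGP machine: a Prachar-type theorem and a Davenport-constant bound for $(\mathbb{Z}/L\mathbb{Z})^*$ feeding into the zero-sum-subsequence count. So the overall strategy matches.

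Two of your details, however, are off in ways that matter. First, you define
$\mathcal{P}(L,x,K)=\{p\le x : p \text{ splits completely in } K,\ (p-1)\mid L,\ p\nmid L\cdot\disc(K)\}$
and claim $|\mathcal{P}|\gg x^{1-\eta}$. But the condition $(p-1)\mid L$ is much too restrictive: any such $p$ is of the form $d+1$ for a divisor $d$ of $L$, and the relevant divisors $d$ must be small, so this set cannot have $\gg x^{1-\eta}$ elements. The whole point of AGP's Prachar step (the paper's Theorem 4.1) is to fix a single multiplier $k$ coprime to $L$ and work instead with primes $p=dk+1$, $d\mid L$, for which $p-1=dk$ divides $kL$, and then force $n\equiv 1\bmod kL$. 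Without introducing $k$ one cannot make the set of primes large enough for the combinatorics. Second, $L$ is not ``a variant of $\operatorname{lcm}\{1,\ldots,y\}$''; it is the product of Friedlander primes, i.e.\ primes $q\in(y^3/\log y,\,y^3]$ with $q-1$ $y$-smooth, which is essential because it makes $\omega(L)$ (hence the number of divisors) enormous while keeping $\lambda(L)$, and hence the Davenport constant $n(G)$ via the van Emde Boas--Kruyswijk/Meshulam bound, under control. You gesture at copying AGP verbatim, so these may just be sloppiness in the sketch, but as written they are gaps: the claimed lower bound on $|\mathcal{P}|$ is false and the construction of $L$ would not give the required control over $n(G)$.
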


A Carmichael-Frobenius number is a Frobenius
pseudoprime with respect to all polynomials with splitting field $K$ (a Carmichael number is thus a Carmichael-Frobenius number with respect to $\mathbb{Q}$).
Proving the theorem for the general case allows
specialization to other cases.  The constant $c(K)$ can, in principle, be made effective.

In particular, the results of \cite{meone} combined with
Theorem~\ref{theorem:1}
show that there are infinitely many Perrin pseudoprimes,
if we take $f(x)=x^3-x-1$.  
Gurak \cite{gurak} defines pseudoprimes using congruences for higher-order recurrence sequences.  Szekeres \cite {sze} defines pseudoprimes with respect to a polynomial as those for which every symmetric polynomial of its roots is invariant under the map $x\mapsto x^n$.
From \cite{meone}, we have that there are
infinitely many pseudoprimes in the senses of both Gurak
and Szekeres.

By Proposition 6.1 of \cite{meone}, in order to prove Theorem~\ref{theorem:1},
it suffices to show that there are infinitely many
Carmichael numbers $n$, such that for all $p|n$, $f(x)$ splits completely mod $p$.
The proof will involve modifying the
construction in \cite{agp} to ensure that each of the prime factors
of the Carmichael numbers constructed has this property.
 
The main result that will be used in this proof is a version 
of the ``prime ideal theorem for arithmetic progressions'' that gives a
uniform error term, except for a possible exceptional progression arising from a Siegel zero.

\section{Distribution of Primes}
Theorems about the distribution of primes in arithmetic progressions
are traditionally proved using Dirichlet characters --- homomorphisms from the
integers mod $q$ to the complex roots of unity.  (The map is
defined to be zero on integers not coprime to $q$.)
Because we want to prove a theorem about primes in a particular
arithmetic progression which split completely, we
employ a slightly different sort of Dirichlet character.

We recall the definitions of \cite{lenpom}.

\begin{definitions}
Let $K$ be an algebraic number field and $\mathfrak{O}_K$ its ring of
integers.  A {\bf  cycle} of $K$ is a formal product $\mathfrak{m}=\prod
\mathfrak{p}^{m(\mathfrak{p})}$ extending over all of the primes of $K$, where
the $m(\mathfrak p)$ are nonnegative integers, almost all $0$, with
$m(\mathfrak{p})=0$ for complex $\mathfrak{p}$ and $m(\mathfrak{p})\le 1$ for real
$\mathfrak{p}$.  Let $\EuScript{I}$ be the group of fractional ideals of
$\mathfrak{O}_K$.  Let $\EuScript{I}(\mathfrak{m})$ be the subgroup of $\EuScript{I}$
generated by the finite primes $\mathfrak{p}$ for which $m(\mathfrak{p})=0$.
Let $P_{\mathfrak{m}}$ be the subgroup of $\EuScript{I}(\mathfrak{m})$ generated
by the nonzero ideals of the form $\mathfrak{O}_K\alpha$, where
$\alpha\in\mathfrak{O}_K$ is such that $\alpha\equiv 1 \bmod
\mathfrak{p}^{m(\mathfrak{p})}$ for each finite prime $\mathfrak p$, and
$\alpha>0$ under each embedding of $K$ in the field of real numbers
corresponding to a real prime $\mathfrak{p}$ with $m(\mathfrak{p})=1$.  The
{\bf norm} of a cycle $\mathfrak{m}=\prod\mathfrak{p}^{m(\mathfrak{p})}$ is the
number $N(\mathfrak{m})=\prod N(\mathfrak{p})^{m(\mathfrak{p})}$,
where $\mathfrak{p}$
in the second product ranges over only the finite primes, and
$N(\mathfrak{p})$ is the norm of $\mathfrak{p}$ in $K$.

A {\bf Dirichlet character of $K$} is a pair consisting of a cycle
$\mathfrak{m}$ of $K$ and a group homomorphism
$\chi:\EuScript{I}(\mathfrak{m})\mapsto\mathbb{C}^*$ such that
$P_{\mathfrak{m}}$
is contained in the kernel.  We call $\mathfrak{m}$ the {\bf modulus} of
$\chi$.

Given two Dirichlet characters $\chi$ and $\chi'$ with moduli $\mathfrak{m}$ 
and $\mathfrak{m}'$, we say that $\chi$ is {\bf induced} by $\chi'$ 
if $\mathfrak{m}'(\mathfrak{p})\le\mathfrak{m}(\mathfrak{p})$ for all
$\mathfrak{p}$ and $\chi$ 
is the composition of the inclusion 
$\EuScript{I}(\mathfrak{m})\subset\EuScript{I}(\mathfrak{m}')$ with $\chi'$.  A Dirichlet 
character is {\bf primitive} if it is not induced by any character other than 
itself.  The modulus of the unique primitive character inducing a 
Dirichlet character $\chi$ is called the {\bf conductor} of $\chi$.

For a Dirichlet character $\chi$ of $K$, $L(s,\chi)$ is $\sum
\chi(\mathfrak{i})N(\mathfrak{i})^{-s}$, where the sum is over the nonzero
ideals of the ring of integers of $K$ and $\operatorname{Re} s>1$.  This sum
is absolutely convergent, and $L(s,\chi')$ can be extended to a
meromorphic function on the complex plane.  It has a simple pole at
$s=1$ if $\chi'$ is principal and is holomorphic otherwise.

\end{definitions}

Let $K$ be the splitting field of $f$, $n=[K:\mathbb{Q}]$, and
$d=\disc(K)$.
Let $\chi$ be a Dirichlet character mod $q$ (in the traditional sense).  We associate to it a
Dirichlet character of $K$ in the
following way.

Given an ideal $\mathfrak{i}\subset\mathfrak{O}_K$, 
let $\chi'(\mathfrak{i})=\chi(N(\mathfrak{i}))$.
Then $\chi'$ is an example of a
Dirichlet character of $K$ with conductor dividing $N(q)$.

Let
$\Psi(x,\chi')=\sum_{N(\mathfrak{i})<x}\chi'(\mathfrak{i})\Lambda(\mathfrak{i})$,
where $\Lambda(\mathfrak{i})=\log N(\mathfrak{p})$
if $\mathfrak{i}=\mathfrak{p}^k$ for some prime ideal $\mathfrak{p}$, and $0$
otherwise.
Then $$\frac 1{\phi(q)}\sum_{\chi\bmod q} \bar\chi(a)\Psi(x,\chi')=
\sum_{\substack{
N(\mathfrak{i})<x\\N(\mathfrak{i})\equiv a\bmod q}} \Lambda(\mathfrak{i}).$$

We will prove some results about $L(s,\chi')$ that enable us to obtain
results about $\Psi(x,\chi')$, and thus about the distribution of
primes that split completely in $K$ and lie in a particular residue
class.

\begin{lemma}
Fix a number field $K$.
Let $\chi$ be a {\bf real} Dirichlet character of $K \bmod \mathfrak{m}$.
Let $M=N(\mathfrak{m})$.
Let $s$ be a real number in the range $2>s>1$.
If $\chi$ is principal, then
$$\frac {L'}L(s,\chi) > -\frac 1{s-1}-c_1\log 2M,$$
for some $c_1>0$, depending on $K$.
If $\chi$ is non-principal, and if $L(s,\chi)$ has some real zero $\rho>0$,
$$\frac {L'}L(s,\chi) > \frac 1{s-\rho}-c_1\log 2M,$$
and 
$$\frac {L'}L(s,\chi)>-c_1\log 2M,$$ if it has no real zero.
\end{lemma}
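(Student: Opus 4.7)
The plan is to derive the bound from the Hadamard product of the completed Hecke $L$-function together with the positivity of the zero-sum at real $s>1$. First I form the completed $L$-function
$$\Lambda(s,\chi) = (|d_K|N(\mathfrak{m}))^{s/2}\,\gamma_K(s)\,L(s,\chi),$$
where $\gamma_K(s)$ is the usual product of $\Gamma_{\mathbb R}$ and $\Gamma_{\mathbb C}$ factors dictated by the signature of $K$, multiplied by $s(s-1)$ in the principal case so as to clear the poles. Then $\Lambda$ is entire of order $1$, and its Hadamard factorization, combined with the standard identity $\operatorname{Re} B = -\sum_\rho \operatorname{Re}(1/\rho)$ valid for real $\chi$, yields
$$\operatorname{Re}\frac{\Lambda'}{\Lambda}(s,\chi) + \frac{\delta}{s}+\frac{\delta}{s-1} = \sum_\rho \operatorname{Re}\frac{1}{s-\rho},$$
where $\delta=1$ in the principal case and $0$ otherwise, and the sum is over the non-trivial zeros of $\Lambda(s,\chi)$.

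Next I translate this into a bound for $L'/L$. Logarithmic differentiation of the definition of $\Lambda$ gives
$$\frac{L'}{L}(s,\chi) = \frac{\Lambda'}{\Lambda}(s,\chi) - \frac{1}{2}\log(|d_K|N(\mathfrak{m})) - \frac{\gamma_K'}{\gamma_K}(s).$$
For real $s\in(1,2)$ every non-trivial zero $\rho=\beta+i\gamma$ lies in the critical strip $0\le\beta\le 1<s$, so each summand $\operatorname{Re}(1/(s-\rho)) = (s-\beta)/|s-\rho|^2$ is non-negative. I can therefore discard the entire zero-sum when constructing a lower bound, except that in the non-principal case with a real zero $\rho>0$ I retain that single term to produce the $1/(s-\rho)$ appearing in the stated inequality. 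The principal case instead picks up $-1/(s-1)$ (and a harmless $-1/s$) from the pole correction, and the zero-sum is dropped entirely.

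Finally I absorb the remaining terms into $c_1\log 2M$. The factor $\gamma_K'/\gamma_K(s)$ is a finite sum of $\Gamma'/\Gamma$ values at arguments in a fixed compact interval, so it is bounded by a constant depending only on $K$. The conductor term $\frac{1}{2}\log(|d_K|N(\mathfrak m)) \le \frac{1}{2}(\log|d_K| + \log M)$ is majorised by $c_1\log 2M$ with $c_1=c_1(K)$, since $|d_K|$ is fixed. The main technical point requiring care is fixing the exact shape of the completed $L$-function --- in particular, verifying that the conductor factor is $(|d_K|N(\mathfrak m))^{s/2}$ rather than some higher power, which is what determines the coefficient of $\log M$ in the bound. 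This is classical material on Hecke $L$-functions, and once it is granted the lemma follows from the positivity of $\operatorname{Re}(1/(s-\rho))$ combined with routine estimates.
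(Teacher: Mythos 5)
Your proof is correct and follows essentially the same route as the paper's. The paper simply cites the explicit formula for $L'/L$ from Lagarias--Odlyzko (their Eq.\ (5.9) together with Lemmas 5.1 and 5.3), which is precisely the Hadamard-product identity you re-derive; the positivity of $\operatorname{Re}(1/(s-\rho))$ to discard all but at most one zero, and the absorption of the conductor and $\gamma'_K/\gamma_K$ terms into $c_1\log 2M$, are identical.
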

\begin{proof}
Assume $\chi$ is non-principal.  From Eq. (5.9) of \cite{lo},
$$\frac {L'}L(s,\chi)=B(\chi)+
\sum_{\rho}\left(\frac 1{s-\rho}+\frac 1\rho\right)
-\frac 12\log A(\chi)-\frac{\gamma'_\chi}{\gamma_\chi}(s),$$
where the sum is over all the non-trivial zeroes of $L(s,\chi)$,
$A(\chi)=dM$, $d=\disc(K)$, and $\gamma_\chi(s)=
\left[\pi^{-\frac{s+1}2}\Gamma\left(\frac{s+1}2\right)\right]^b
\left[\pi^{-\frac s2}\Gamma\left(\frac s2\right)\right]^a$ for
nonnegative integers $a$ and $b$ depending on $\chi$ such that
$a+b=n=[K:\mathbb{Q}]$.
The exact dependence, described in \cite{lo}, is irrelevant here.

The $\log A(\chi)$ term can be bounded because $\log A(\chi)=\log
dM\ll \log 2M$.

$B(\chi)$ is defined implicitly in \cite{lo}.
By Lemma 5.1 of that paper, we have
$$B(\chi)=-\operatorname{Re}\sum_{\rho}\frac 1{\rho}.$$

We have from Lemma 5.3 of \cite{lo} that
$$|\frac{\gamma'_\chi}{\gamma_\chi}(s)|\ll n\log(s+2),$$
where the implied constant is absolute.

Thus $\frac{L'}L(s,\chi)>\sum_{\rho}\operatorname{Re}\frac
1{s-\rho}-c_1\log 2M$, for some $c_1>0$.
We have that $\operatorname{Re}\frac 1{s-\rho}=\frac {s-\operatorname{Re}\rho}
{|s-\rho|^2}>0$, so we 
can omit any part of the sum.  We omit anything but one possible real zero.

Thus
$$\frac{L'}L(s,\chi)>\frac 1{s-\rho}-c_1\log 2M,$$
if $L(s,\chi)$ has a real zero $\rho$, and
$$\frac{L'}L(s,\chi)>-c_1\log 2M,$$
independent of the existence of real zeros.

Now assume $\chi$ is principal.
From (5.9) of \cite{lo}
$$\frac {L'}L(s,\chi)=\sum_{\rho}\left(\frac 1{s-\rho}+\frac 1\rho\right)
-\frac 1s-\frac 1{s-1}-\frac
12\log
A(\chi) +\frac{\gamma'_\chi}{\gamma_\chi}(s).$$
By the same arguments as in the non-principal case (and the fact that
$\frac 1s<1$), we have that
$$\frac{L'}L(s,\chi)>-\frac 1{s-1}-c_1\log 2M.$$
\end{proof}

The following version of the Landau-Page Lemma for Dirichlet $L$-functions over a number field shows that there is at
most one ``Siegel zero'' for characters of a bounded modulus.

\begin{lemma}
Given a number field $K$,
there is a computable constant $c_2>0$, depending on $K$,
such that for all $T\ge 2$,
there is at most one primitive character $\chi_1$ with modulus
$\mathfrak{m}$, $1\le N(\mathfrak{m})<
T$ for which $L(s,\chi_1)$ has a zero $\beta_1+i\gamma_1$ satisfying
$\beta_1\ge 1-c_2/\log T$ and $|\gamma_1|<T$.
\end{lemma}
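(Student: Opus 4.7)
The plan is to adapt the classical Landau--Page argument to the number-field setting with possibly complex zeros. Suppose for contradiction that distinct primitive characters $\chi_1,\chi_2$ of conductor norm $<T$ each have a zero $\rho_j=\beta_j+i\gamma_j$ as in the statement (the case of two zeros of a single character is treated by taking $\chi_1=\chi_2$). For each $j$, introduce the Hecke grossencharacter $\psi_j(\mathfrak{i}):=\chi_j(\mathfrak{i}) N(\mathfrak{i})^{-i\gamma_j}$, so that $L(s,\psi_j)=L(s+i\gamma_j,\chi_j)$ has a real zero at $\beta_j$. The previous lemma extends to such $\psi_j$ essentially verbatim: the imaginary shift enters only through the gamma-factor estimate, inflating the error term from $c_1\log 2M$ to $O(\log 2M(|\gamma_j|+2))=O(\log T)$, since both $N(\mathfrak{m}_j)$ and $|\gamma_j|$ are bounded by $T$.

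Next I would form the auxiliary function
$$F(s) := \zeta_K(s)^4 \prod_{j=1}^{2} \bigl[L(s,\psi_j) L(s,\bar\psi_j)\bigr]^2 \prod_{\epsilon_1,\epsilon_2\in\{\pm 1\}} L(s,\psi_1^{\epsilon_1}\psi_2^{\epsilon_2}),$$
with the convention $\psi^{-1}=\bar\psi$. A routine Euler-product calculation gives
$$-\frac{F'}{F}(s) = \sum_{\mathfrak{i}} \frac{\Lambda(\mathfrak{i})}{N(\mathfrak{i})^s}\,\bigl|1+\psi_1(\mathfrak{i})\bigr|^2\bigl|1+\psi_2(\mathfrak{i})\bigr|^2 \ge 0$$
for real $s>1$. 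Applying the extended previous lemma at $s_0=1+\lambda/\log T$ to each factor of $F$---the $\zeta_K^4$ contributes $-4/(s_0-1)$ from its pole, each squared pair $[L(s_0,\psi_j)L(s_0,\bar\psi_j)]^2$ contributes the real-zero bonus $4/(s_0-\beta_j)$, and the four cross characters (of analytic conductor $T^{O(1)}$) contribute $O(\log T)$---the inequality $-F'(s_0)/F(s_0)\ge 0$ rearranges to
$$\frac{4}{s_0-\beta_1}+\frac{4}{s_0-\beta_2} \le \frac{4}{s_0-1}+C(K)\log T.$$

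Writing $\eta_j:=1-\beta_j\le c_2/\log T$ and $s_0-1=\lambda/\log T$, this rescales to $\tfrac{8}{\lambda+c_2} \le \tfrac{4}{\lambda}+C(K)$. Choosing $\lambda=1/C(K)$ and then $c_2=c_2(K)$ sufficiently small makes the left side strictly exceed the right, a contradiction.

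The main obstacle is the bookkeeping: verifying the positivity identity for complex $\chi_1,\chi_2$ (which forces the symmetric twist-by-conjugate construction of $F$), extending the previous lemma to $L$-functions of Hecke grossencharacters rather than strictly Dirichlet characters of $K$, and handling degenerate cross characters that may arise when the $\chi_j$ are related (e.g.\ $\psi_1\bar\psi_2$ becoming principal after a shift, so its $L$-function is a shifted $\zeta_K$). Each is standard but requires care in tracking how the imaginary shifts $\gamma_j$ enter through the gamma factors.
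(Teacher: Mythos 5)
Your proof is correct in outline, but it takes a genuinely different route from the paper. You attack the general case head-on: you shift a possibly complex zero $\beta_j + i\gamma_j$ to a real zero by twisting to the grossencharacter $\psi_j = \chi_j N(\cdot)^{-i\gamma_j}$, and then you work with the $9$-factor auxiliary function coming from $|1+\psi_1|^2|1+\psi_2|^2$. That forces you to (a) re-prove the preceding lemma for grossencharacters rather than Dirichlet characters of $K$, (b) track the $|\gamma_j| < T$ dependence through the gamma factors, and (c) handle degenerate cross-products $\psi_1^{\pm 1}\psi_2^{\pm 1}$ that may collapse to a shift of $\zeta_K$ (which introduces an unwanted pole in the contour). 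None of this is wrong, but it is substantially more machinery.

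The paper avoids all of it by first invoking Lemma~3.5 of \cite{lenpom}, which says that a zero inside the Landau--Page box can only occur for a \emph{real} non-principal character and must itself be \emph{real}. With complex characters and complex zeros ruled out at the outset, the auxiliary function collapses to the classical $4$-term product
$$-\frac{L'}{L}(s,\chi_0)-\frac{L'}{L}(s,\chi_1)-\frac{L'}{L}(s,\chi_2)-\frac{L'}{L}(s,\chi_1\chi_2)
= \sum \Lambda(\mathfrak{i})\bigl(\chi_0(\mathfrak{i})+\chi_1(\mathfrak{i})\bigr)\bigl(\chi_0(\mathfrak{i})+\chi_2(\mathfrak{i})\bigr)N(\mathfrak{i})^{-s}\ge 0,$$
with $\chi_0$ the principal character modulo $\gcd(\mathfrak{m}_1,\mathfrak{m}_2)$. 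From there the previous lemma is applied verbatim (no grossencharacter extension needed), and the contradiction at $s = 1 + 3c_2/\log T$ falls out immediately. So the trade-off is: your approach is self-contained and does not rely on the reality reduction, at the cost of a heavier positivity identity and some case analysis on degenerate cross-characters; the paper's approach is shorter and stays entirely within the framework of the lemma it just proved, but it does import the reality-of-exceptional-zeros result from \cite{lenpom} as a black box.
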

\begin{proof}
We follow the proof in [6, p. 94].


Lemma 3.5 of \cite{lenpom} allows us
to consider only real zeros of real non-principal characters.

Let $\chi_1$ and $\chi_2$ be primitive characters mod $\mathfrak{m}_1$
and $\mathfrak{m}_2$,
respectively, where $N(\mathfrak{m}_1)$ and $N(\mathfrak{m}_2)$ are at most $T$.

Consider the expression
$$-\frac{L'}L(s,\chi_0)-\frac{L'}L(s,\chi_1)-\frac{L'}L(s,\chi_2)
-\frac{L'}L(s,\chi_1\chi_2),$$
where $\chi_0$ is the principal character modulo the $\gcd$ of $\mathfrak{m}_1$
and $\mathfrak{m}_2$.  (We define
$\gcd(\prod\mathfrak{p}^{m_1(\mathfrak{p})},\prod\mathfrak{p}^{m_2(\mathfrak{p})})
=\prod\mathfrak{p}^{\min(m_1(\mathfrak{p}),m_2(\mathfrak{p}))}$.)
This expression is equal to 
\begin{equation}\sum
\Lambda(\mathfrak{i})(\chi_0(\mathfrak{i})+\chi_1(\mathfrak{i}))(\chi_0(\mathfrak{i})+\chi_2(\mathfrak{i}))N({\mathfrak i})^{-s}>0,\end{equation}
for $\operatorname{Re} s>1$.

Assume that $L(s,\chi_1)$ and $L(s,\chi_2)$ have real zeros, $\beta_1$ and
$\beta_2$ respectively.
Applying the previous lemma to \thetag{1} for real $s>1$, we obtain
$$-\frac{1}{s-\beta_1}-\frac{1}{s-\beta_2}+\frac
1{s-1}+c_3\log T>0,$$
for some $c_3>0$ depending on $K$, but not $T$.
Rearranging, we get
$$\frac 1{s-\beta_1}+\frac 1{s-\beta_2}<\frac
1{s-1}+c_3\log T.$$

Let $c_2=\frac 1{6c_3}$ and assume that each $\beta_i\ge 1-\frac{c_2}{\log T}$.

Taking $s=1+3c_2/\log T$ gives us
$\frac 1{s-\beta_i}\ge\frac{\log T}{4c_2}$ and $\frac 1{s-1}=
\frac{\log T}{3c_2}$.

We now have that
$$\frac{\log T}{2c_2}<\frac{\log T}{3c_2}+c_3\log T.$$
Simplifying, we get $\frac 1{6c_2}<c_3$.  Substituting the value of $c_2$ gives the
desired contradiction.
\end{proof}

For each Dirichlet character $\chi$ of a field $K$ and real numbers
$\sigma$, $T$, in the ranges $\frac 12\le\sigma\le 1$, $T\ge 0$, let
$N(\sigma,T,\chi)$ be the number of zeros $s=\beta+i\gamma$ of the
Dirichlet L-function $L(s,\chi)$ inside the box $\sigma<\beta< 1$
and $|\gamma|< T$.
Let $\EuScript{A}$ be the set of real numbers $A>2$ for which there exists
a number $C_A\ge 1$, such that for all $\sigma\ge 1-\frac 1A$ and
$T\ge 1$, we have
$$N(\sigma,T,\mathfrak{m}):=\sum_{\chi\bmod \mathfrak{m}} N(\sigma,T,\chi)\le 
C_A(N(\mathfrak{m})T^n)^{A(1-\sigma)},$$
for all moduli $\mathfrak m$.

Hilano \cite{hiltwo} has shown that every $A\ge 2890$ is in $\EuScript{A}$.
The existence of such an $A$ was first shown by Fogels \cite{fogels}.

\begin{theorem}
Let $K$ be a number field.
For any given $\epsilon>0$, there exist
numbers $x_{\epsilon}$,
$\eta_{\epsilon}>0$, and an integer $q_\epsilon(x)$, all depending on
$K$,
 such that whenever $x\ge
x_{\epsilon}$ and $x^{1/2}<y<x$,
$$\left|\sum_{\substack{
N(\mathfrak{i})<y\\N(\mathfrak{i})\equiv a\bmod q}} \Lambda(\mathfrak{i})-
\frac{y}{\phi(q)}\right|\le\epsilon\frac{y}{\phi(q)}$$
for all integers $q$ not divisible by $q_\epsilon(x)$,
with $(a,q)=1$ and $q$ in the range $1\le q\le x^{\eta_{\epsilon}}$.
Furthermore $q_\epsilon(x)>\log x$.
\end{theorem}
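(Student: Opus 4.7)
The plan is to combine the orthogonality identity from Section 3 with a truncated explicit formula for each $\Psi(y,\chi')$, then control the resulting zero sum using Hilano's density estimate, absorbing any exceptional Siegel zero into the modulus $q_\epsilon(x)$ via Lemma 3.2. From the identity
\[
\sum_{\substack{N(\mathfrak i)<y\\N(\mathfrak i)\equiv a\bmod q}}\Lambda(\mathfrak i)=\frac1{\phi(q)}\sum_{\chi\bmod q}\bar\chi(a)\,\Psi(y,\chi'),
\]
the principal character contributes $\Psi(y,\chi_0')=y+O(ye^{-c\sqrt{\log y}})$ by the prime ideal theorem for $K$, up to $O(\log q\log y)$ corrections from primes dividing $q$; this supplies the main term $y/\phi(q)$. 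The remainder of the proof reduces to showing $\Psi(y,\chi')=o(\epsilon y)$ uniformly for every nonprincipal $\chi\bmod q$ when $q\le x^{\eta_\epsilon}$ and $q_\epsilon(x)\nmid q$.

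For each nonprincipal $\chi$, a truncated Perron integral applied to $-L'/L(s,\chi')$ gives
\[
\Psi(y,\chi')=-\sum_{|\gamma|\le T}\frac{y^\rho}{\rho}+O\!\left(\frac{y\log^2(yN(\mathfrak m))}{T}\right),
\]
where $\mathfrak m$ is the conductor of $\chi'$, so $N(\mathfrak m)\le q^n$. Choosing $T=x^\theta$ with a small fixed $\theta$ kills the tail. Partition the critical strip into horizontal bands of width $1/\log T$; Hilano's estimate $N(\sigma,T,\mathfrak m)\le C_A(N(\mathfrak m)T^n)^{A(1-\sigma)}$ then bounds each band's contribution by $y\cdot\bigl((q^nT^n)^A/y\bigr)^{1-\sigma}\log T$. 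After excluding the Landau-Page zone $\beta>1-c_2/\log T$, summing these band contributions geometrically yields a total of $O(y^{1-c_2/\log T})\cdot\mathrm{polylog}=o(\epsilon y)$, provided $\eta_\epsilon$ and $\theta$ are chosen small enough (in terms of $A$ and $n=[K:\mathbb Q]$) that $(q^nT^n)^A<y^{1-\delta}$ uniformly in $q\le x^{\eta_\epsilon}$; an explicit admissible choice is $\eta_\epsilon=\theta=1/(8nA)$.

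The only zeros that could invalidate this bound lie inside the Landau-Page zone. By Lemma 3.2, for the chosen $T=T(x)$ there is at most one primitive character $\chi_1$ with $N(\mathfrak m_1)<T$ whose $L$-function has such a zero. If $\chi_1$ exists and is induced from a rational Dirichlet character, let $m(x)$ be the smallest positive integer $q$ admitting a character $\chi\bmod q$ whose lift $\chi'(\mathfrak i)=\chi(N(\mathfrak i))$ agrees with $\chi_1$, and set $q_\epsilon(x)=\max(m(x),\lceil\log x\rceil+1)$; otherwise set $q_\epsilon(x)=\lceil\log x\rceil+1$. Then for any $q$ not divisible by $q_\epsilon(x)$, no character in the orthogonality sum is induced by $\chi_1$, the Landau-Page-free density bound of the previous paragraph applies, and the inequality $q_\epsilon(x)>\log x$ holds by construction. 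Effectivity follows from the computable constants in Lemma 3.2 and in Hilano's estimate.

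The main obstacle will be the parameter calibration inside the band-summation step: $\eta_\epsilon$, $\theta$, and the band width must be juggled against $A\ge 2890$ and $n$ so that the accumulated zero contribution is genuinely $o(\epsilon y)$ uniformly in $a$, $q$, and $y\in(x^{1/2},x)$. No new analytic ingredient beyond the two lemmas of Section 3 and Hilano's density estimate is needed; the argument is essentially a Linnik-type density bookkeeping transposed from $\mathbb Q$ to $K$, with the exceptional Hecke character pulled back through $\chi'(\mathfrak i)=\chi(N(\mathfrak i))$ to an exceptional rational modulus $q_\epsilon(x)$.
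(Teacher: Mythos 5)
Your overall strategy matches the paper's: combine the orthogonality identity with a truncated explicit formula for $\Psi(y,\chi')$, then control the resulting zero sum with Hilano's density estimate, using Lemma~3.2 to remove at most one exceptional modulus. The horizontal-band decomposition you propose is essentially a discretized version of the paper's integral identity $y^\beta = y^{1-1/(2An)} + \log y \int_{1-1/(2An)}^\beta y^\sigma\,d\sigma$ followed by partial summation, so the two bookkeeping schemes are interchangeable, and the treatment of the exceptional character is the same in spirit.

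There is, however, a genuine error in your parameter calibration, and it sits exactly where you warned the "main obstacle" would be. The choice $\eta_\epsilon=\theta=1/(8nA)$, independent of $\epsilon$, cannot work. With $T=x^\theta$, the Landau--Page boundary lies at $1-c_2/\log T=1-c_2/(\theta\log x)$; since $\log y\asymp\log x$ for $y\in(x^{1/2},x)$, the band nearest that boundary contributes on the order of $y\cdot y^{-\delta c_2/\log T}\asymp y\,e^{-\delta c_2/\theta}$, which is a \emph{fixed positive constant} times $y$, not $o(\epsilon y)$. Your assertion that the geometric band sum is $O(y^{1-c_2/\log T})\cdot\mathrm{polylog}=o(\epsilon y)$ is therefore false for fixed $\theta$. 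To push the error below $\epsilon y$ you must let the excluded strip widen as $\epsilon$ shrinks, i.e.\ take $\theta$ (and hence $\eta_\epsilon$) of size roughly $c_2/\log(1/\epsilon)$. The paper does precisely this: it sets $\nu=3\log(36C_A/\epsilon)$, $\tau=1-\nu/\log x$, and $\eta_\epsilon=\min(1/(8An^2),\,c_2/(n\nu))$, so that Lemma~3.2 clears the zone $[\tau,1]$ and the density estimate bounds the middle range by $4C_Ay\,e^{-\nu/3}\le\frac\epsilon 9 y$. Your proof is repairable by making $\eta_\epsilon$ and $\theta$ depend on $\epsilon$ in this way, but as written the key inequality fails.
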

\begin{proof}
Let $\nu=3\log(36C_A/\epsilon)$.
Let $\eta_\epsilon=\min(\frac 1{8An^2},\frac{c_2}{n\nu})$.
We can require $x_\epsilon>\max(e^{4A\nu/\eta_\epsilon},
18(C_A/\epsilon)^{2/\eta_\epsilon})$.

We can deduce the following explicit formula from \cite{lenpom},
proof of Theorem 3.1:
(Eqs. (3.2), (3.3) and the equation following the ``Hence'' on p. 493).

\begin{equation}
\begin{split}
\sum_{\substack{
N(\mathfrak{a})<y\\N(\mathfrak{a})\equiv a\bmod q}}
\Lambda(\mathfrak{a})=\frac{y}{\phi(q)}-\frac 1{\phi(q)}
\sum_{\chi\bmod q}\bar\chi(a)&\sum_{\substack{{L(\beta+i\gamma,\chi)=0}\\
\beta\ge 1/2, |\gamma|\le T}} \frac
{y^{\beta+i\gamma}}{\beta+i\gamma}+\\
O\Bigl(n\log y+
n\frac{y\log y(\log y+\log dq+\log T)}T+&\\
n\log dq+ny^{\frac 12}\log y&(\log q+\log y)\Bigr).
\end{split}
\end{equation}

We have that $\eta_\epsilon<1/16$ (since by definition, $A>2$), so $\log q<\log y$ and $q<y^{1/3}$.
We take $T=x$, so $y<T<y^2$.

The error term in $\thetag{2}$ is
$$O\left(ny^{1/2}(\log^2 y+\log y\log d)+n\log d\right).$$
Because $d,n$ are fixed, the error is 
$O(y^{1/2}\log^2y)=O(\frac{y^{6/7}}q)$, which is less than
$\frac{\epsilon}3\frac y{\phi(q)}$ for $y$ sufficiently large.

The double sum may be bounded by noting that $|y^{\beta+i\gamma}|=y^\beta$,
and $\beta+i\gamma\ge\sqrt{1/4+\gamma^2}\ge (1+|\gamma|)/3$.

Thus
\begin{equation}
\left|\sum_{\substack{
N(\mathfrak{a})<y\\N(\mathfrak{a})\equiv 1\bmod q}}
\Lambda(\mathfrak{a})-\frac{y}{\phi(q)}\right|\le\frac 3{\phi(q)}
\sum_{\chi\bmod q}\sum_{\substack{{L(\beta+i\gamma,\chi)=0}\\\
\beta\ge 1/2, |\gamma|\le x}}\frac{y^\beta}{1+|\gamma|}
+\frac\epsilon 3\frac{y}{\phi(q)}.\end{equation}

Write $\sum_\sigma^{\alpha}$ for a sum over all zeros of
$\beta+i\gamma$ of $L(s,\chi)$ and over all characters $\chi \bmod q$
where $\sigma\le\beta<\alpha$ and $|\gamma|<x$.  (Each $\beta+i\gamma$
is counted with multiplicity equal to the number of those L-functions
for which it is a zero.)  To estimate the double sum in \thetag{3}
we use the upper bounds $y^\beta\le y^{1-1/(2An)}$ for $\beta\le
1-1/(2An)$, and $y^\beta\le y$ for $\tau\le\beta\le 1$, where $\tau=1-\nu/\log
x$.  In the range
$1-1/(2An)\le\beta\le\tau$, we use the identity $y^\beta=y^{1-1/(2An)}+\log y
\int_{1-1/(2An)}^\beta y^\sigma d\sigma$.

Therefore, the double sum in \thetag{3} is at most
\begin{equation}
\begin{split}
&\sum_{1/2}^{1-1/(2An)} \frac{y^{1-1/(2An)}}{1+|\gamma|}+\log y\sum_{1-1/(2An)}^\tau
\frac 1{1+|\gamma|}\int_{1-1/(2An)}^\beta y^{\sigma} d\sigma+y
\sum_\tau^1 \frac 1{1+|\gamma|}\\
&\le y^{1-1/(2An)}\sum^1_{1/2}\frac 1{1+|\gamma|}+\log
y\int_{1-1/(2An)}^\tau y^\sigma\left(\sum_\sigma^1 \frac 1{1+|\gamma|}\right)d\sigma\\
&+y\sum_\tau^1 \frac 1{1+|\gamma|}.
\end{split}
\end{equation}

For $\sigma\ge 1/2$, we have, by partial summation,
$$\sum_\sigma^1\frac 1{1+|\gamma|}\le N(\sigma,2,q)+\frac{N(\sigma,x,q)}x
+\int_2^{x}\frac{N(\sigma,t,q)}{t^2} dt.$$

By \cite{hilone}, 
$N(1/2,t,q)<c_4nq^nt\log qt$.  For $t$ in the
range $2\le t\le x$, we have $N(1/2,t,q)/t\le c_4nq^n\log qx$.

Applying this,
$$\sum_{1/2}^1 \frac{y^{1-\frac1{2An}}}{1+|\gamma|}\le
2c_4nq^ny^{1-\frac1{2An}}\log qx\left(2+\int_2^x\frac{dt}t\right)\le
5c_4nq^ny^{1-\frac1{2An}}\log^2 qx.$$
Because we insist that $\eta_\epsilon<\frac 1{8An^2}$, the first term in
\thetag{4} is
$O(y^{1-1/(3An)})$, which is $<\frac{\epsilon}{18}y$ for $y$
sufficiently large.

If $\sigma\ge 1-1/(2An)$, then $An(1-\sigma)\le 1/2$, so that for any $t$ in
the range $1\le t\le x$, Theorem 9 of \cite{hiltwo} shows that
$N(\sigma,t,d)\le C_Aq^{An(1-\sigma)}t^{1/2}$.  We deduce that
$$\sum_\sigma^1\frac 1{1+|\gamma|} \le C_A q^{An(1-\sigma)}
\left(3+\int_2^{x}\frac{dt}{t^{3/2}}\right)\le 5C_Aq^{An(1-\sigma)}.$$

Using this bound, the middle term in \thetag{4} is 
\begin{equation}
\begin{split}
&\le 5C_Aq^{An}\log y\int_{1-1/(2An)}^\tau \left(\frac
y{q^{An}}\right)^\sigma d\sigma\\
&\le 5C_Aq^{An}\frac{\log y}{log(y/q^{An})}\frac y{q^{An}}
\left(\frac y{q^{An}}\right)^{-(1-\tau)}.
\end{split}
\end{equation}

We have that 
$q^{An}<x^{An\eta}<y^{1/3}$, so $\frac{\log y}{\log (y/q^{An})}< 3/2$.
Also, $$\left(\frac y{q^{An}}\right)^{-(1-\tau)}=\left(\frac
y{q^{An}}\right)^{-\nu/\log x}<e^{-\frac 23\log y\nu/\log
x}<e^{\frac 13\nu}.$$

Thus the middle term in \thetag{4} is $\le 4C_Aye^{-\frac 13\nu}$, which, by the way we chose $\nu$, is $\le\frac{\epsilon}9 y$.

We apply Lemma 2.2 with $T=x^{n\eta_\epsilon}$ and call
the exceptional modulus $q_\epsilon(x)$.  Then for all moduli
less than $x^{\eta_\epsilon}$ and not divisible by $q_\epsilon(x)$, the
$L$-function has no zeros $\beta+i\gamma$ with 
$\beta\ge\tau=1-\nu/\log x$ and $|\gamma|<x^{n\eta_\epsilon}$.

So the third term in \thetag{4} is
$$y\sum^1_\tau \frac 1{1+|\gamma|}\le y\frac{N(\tau,x,q)}{x^{n\eta_\epsilon}}
\le C_Ay(q^nx^n)^{A(1-\tau)}/x^{n\eta_\epsilon} < C_Ayx^{2An\nu/\log x}/x^{n\eta_\epsilon}.$$
This is less than
$C_Ayx^{-\eta_\epsilon/2}$, by our choice of $x$.
Also, since $x>x_\epsilon$, by our choice of $x$, this is less  than
$C_Ay{(18C_A/\epsilon)^{2/\eta_\epsilon}}^{-\eta_\epsilon/2}=\epsilon y/{18}$.
Putting these bounds together, we get the desired theorem.
\end{proof}

Theorem 2.1 of {\cite{agp}} shows, essentially, that the number of primes in an
arithmetic progression less than $x$ cannot be too far away from what you expect.
Furthermore, it shows this for ``most'' moduli up to $x^{\frac 5{12}}$.
Our replacement is the following

\begin{theorem}
Let $f(t)\in\mathbb{Z}[t]$ be a monic polynomial with splitting field
$K$, $[K:\mathbb{Q}]=n$.
Then we have real numbers $x_{1/3},\eta_{1/3}>0$
and an integer $q_{1/3}(x)>\log
x$, depending on $K$ as described in Theorem 3.3, such that the
following statement holds.
If $q\le x^{\eta_{1/3}}$, $\gcd(a,q)=1$, $q_{1/3}(x)\nmid q$, $x\ge
x_{1/3}$ and $x^{1/2}<y<x$, then the number of primes $p<y$
that are $a \bmod q$ and such that  $f(t)$ splits into linear factors
mod $p$
(equivalently, $p$ splits completely in $K$)
is at least $\frac{1}{2\phi(q)n}\pi(x)$.  
\end{theorem}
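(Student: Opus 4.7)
The plan is to convert the ideal-theoretic count from Theorem 3.3 into a count of rational primes that split completely in $K$. Since $K$ is the splitting field of $f$, the extension $K/\mathbb{Q}$ is Galois, so for every rational prime $p$ unramified in $K$ (all but finitely many $p$), all prime ideals of $\mathfrak{O}_K$ above $p$ share a common residue degree. Consequently, $p$ splits completely in $K$ (equivalently, $f(t)$ splits into linear factors mod $p$) if and only if some prime ideal above $p$ has residue degree one, in which case there are exactly $n$ such primes, each of norm $p$.

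I would then partition the sum
$$\Psi(y,q,a) := \sum_{\substack{N(\mathfrak{i})<y \\ N(\mathfrak{i}) \equiv a \bmod q}} \Lambda(\mathfrak{i})$$
according to the shape of $\mathfrak{i} = \mathfrak{p}^k$. The first-power, degree-one prime ideals lying above completely split rationals $p \equiv a \bmod q$, $p<y$, contribute exactly $n \cdot \theta^{\mathrm{cs}}(y;q,a)$, where $\theta^{\mathrm{cs}}(y;q,a) := \sum \log p$ ranges over the same set of rational primes. The remaining (nonnegative) contribution arises from prime ideals of residue degree $\ge 2$, from proper prime-ideal powers $\mathfrak{p}^k$ with $k \ge 2$, and from the finitely many ramified primes. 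Standard prime-power estimates, combined with the fact that each rational prime has at most $n$ ideals of $K$ above it, bound this remainder by $O_K(y^{1/2}\log y)$, with implied constant independent of $q$ and $a$.

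Next I would apply Theorem 3.3 with $\epsilon = 1/3$, yielding $\Psi(y,q,a) \ge (2/3)\, y/\phi(q)$. Combined with the partition, this gives
$$\theta^{\mathrm{cs}}(y;q,a) \;\ge\; \frac{2y}{3n\phi(q)} \;-\; \frac{C_K\, y^{1/2}\log y}{n}.$$
Because $\phi(q) \le q \le x^{\eta_{1/3}} \le y^{2\eta_{1/3}}$ and $\eta_{1/3}$ is a small fixed constant (bounded by $1/(8An^2)$), the error is dominated by the main term for $y$ large; enlarging $x_{1/3}$ if necessary absorbs it and produces $\theta^{\mathrm{cs}}(y;q,a) \ge y/(2n\phi(q))$. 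Dividing by $\log y$ and comparing with the prime number theorem asymptotic $\pi(y) = y/\log y + O(y/\log^2 y)$ then delivers the desired lower bound on the number of completely split primes, the slack between the factors $2/3$ and $1/2$ absorbing the lower-order asymptotics.

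The main obstacle is essentially bookkeeping: consolidating all ``for $x$ sufficiently large'' thresholds into a single enlargement of $x_{1/3}$ depending only on $K$, and checking uniformity in $q$ across the whole range $q \le x^{\eta_{1/3}}$. The latter is not really an issue, since the $O(y^{1/2}\log y)$ remainder is independent of $q$ while the main term scales as $y/\phi(q)$ with $\phi(q)$ at most a small fixed power of $x$.
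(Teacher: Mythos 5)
Your proposal follows exactly the same route as the paper's proof: apply Theorem 3.3 with $\epsilon=1/3$ to get the $(2/3)\,y/\phi(q)$ lower bound on $\sum\Lambda(\mathfrak{a})$, discard the $O(y^{1/2})$-size contributions from prime-ideal powers and from primes that do not split completely, and pass from the weighted ideal count to the count of rational primes by standard partial summation. Your write-up is a bit more explicit than the paper's (which compresses the last two steps into one sentence with a citation to Davenport), in particular in spelling out that a completely split $p$ contributes exactly $n$ degree-one prime ideals, but the argument is the same.
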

\begin{proof}
The previous theorem gives that
$$\sum_{\substack{
N(\mathfrak{a})<y\\N(\mathfrak{a})\equiv a\bmod q}}
\Lambda(\mathfrak{a})\ge
\frac{(2/3)y}{\phi(q)}.$$

The sum contains two types of summands not arising from primes.  The first,
prime ideal powers, can be dispensed of in the usual way, by noting that their contribution to the sum
is $O(y^{1/2})$.  The second type is primes that do not split
completely, for which we have $N(p)=p^k$, for $k>1$, so they also contribute $O(y^{1/2})$. 
We pass to the estimate on the number of primes by standard techniques
(\cite{dav}, p. 112).
\end{proof}

Henceforth, let $\eta=\eta_{1/3}$ and $q(x)=q_{1/3}(x)$.

\section{Prachar's Theorem}
We use the following variant of Prachar's Theorem (c.f. Theorem 3.1 of
\cite{agp}).

\begin{theorem}
If $L$ is a squarefree number not divisible
by any prime exceeding $x^{\frac{1-\eta}{2}}$ and for which
$\sum_{\text{prime}\ q|L}\frac 1q\le\frac{1-\eta}{32n}$, then there is a
positive integer $k\le x^{1-\eta}$ with $(k,L)=1$ such that
$$\#\{d|L:dk+1\le x, dk+1 \text{ is prime, splits fully in } K\}\ge \frac
{\#\{d|L: 1\le d\le x^{\eta}\}}{8n\log x}.$$
\end{theorem}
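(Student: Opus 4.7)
The plan is a double-counting and pigeonhole argument in the style of Prachar's original theorem and Theorem 3.1 of \cite{agp}, with our Theorem 3.4 supplying the prime-distribution input in place of the usual Bombieri--Vinogradov-type bound. Let $D=\#\{d\mid L:1\le d\le x^{\eta}\}$ and let $\mathcal{N}$ be the number of triples $(d,k,p)$ satisfying $d\mid L$, $d\le x^{\eta}$, $q(x)\nmid d$, $1\le k\le x^{1-\eta}$, $(k,L)=1$, and $p=dk+1\le x$ a prime splitting completely in $K$. If I can show $\mathcal{N}\ge Dx^{1-\eta}/(8n\log x)$, then since the $k$-coordinate ranges over at most $x^{1-\eta}$ values, pigeonhole supplies a single $k\le x^{1-\eta}$ with $(k,L)=1$ for which at least $D/(8n\log x)$ divisors $d\mid L$ yield a valid prime $dk+1\le x$ that splits fully in $K$, which is exactly the conclusion of the theorem.

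To lower bound $\mathcal{N}$, I apply Theorem 3.4 for each admissible $d$ with modulus $d$, residue $a=1$, and $y=dx^{1-\eta}$. Because $\eta<1/2$ we have $x^{1/2}<y<x$ whenever $1\le d\le x^{\eta}$, so all hypotheses hold once $q(x)\nmid d$ is imposed; the theorem then produces $\gg dx^{1-\eta}/(\phi(d)\,n\log x)$ primes $p\le y$ with $p\equiv 1\bmod d$ that split fully in $K$, each corresponding to a $k=(p-1)/d\le x^{1-\eta}$. A short divisor count shows that at least $D/2$ of the $d\mid L$ with $d\le x^{\eta}$ satisfy $q(x)\nmid d$, since the divisors of $L$ divisible by $q(x)$ biject with those of $L/q(x)$ and $q(x)>\log x$. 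Combined with $d/\phi(d)\ge 1$, this yields $\gg Dx^{1-\eta}/(n\log x)$ triples before the coprimality condition on $k$ is imposed.

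To enforce $(k,L)=1$, for each prime $r\mid L$ I upper bound via Brun--Titchmarsh the triples with $r\mid k$. Such triples force $p\equiv 1\bmod rd$ when $r\nmid d$, and $p\equiv 1\bmod r^{2}d'$ (writing $d=rd'$) when $r\mid d$; in both cases the hypotheses $r\le x^{(1-\eta)/2}$ and $d\le x^{\eta}$ keep the modulus well below $y^{1/2}$, so Brun--Titchmarsh applies. Summing over $d$ and $r$ and using the hypothesis $\sum_{q\mid L}1/q\le(1-\eta)/(32n)$ to bound $L/\phi(L)$ by a constant close to $1$, the bad triple count is controlled by a small fraction of the main term, finishing the lower bound on $\mathcal{N}$. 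The main obstacle is the bookkeeping of absolute constants to arrive at exactly the coefficient $1/(8n)$: the hypothesis $\sum_{q\mid L}1/q\le(1-\eta)/(32n)$ is tuned precisely for this, in direct analogy with the coefficient $1/32$ of Theorem 3.1 of \cite{agp}, with the extra factor of $n$ tracking the factor of $n$ appearing in the denominator of the lower bound in Theorem 3.4.
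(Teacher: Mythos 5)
Your proposal is correct and follows essentially the same route as the paper: apply Theorem 3.4 with modulus $d$ and $y=dx^{1-\eta}$ for each divisor $d\le x^\eta$ of $L$ not divisible by $q(x)$, subtract via Brun--Titchmarsh the primes $p\equiv 1\bmod dq$ for $q\mid L$ prime (using $\sum_{q\mid L}1/q\le\frac{1-\eta}{32n}$ to keep the subtracted terms below half the main term), note that at least half the divisors avoid $q(x)$, and pigeonhole over the at most $x^{1-\eta}$ values of $k=(p-1)/d$. The only wrinkle is your remark about bounding $L/\phi(L)$, which is not how the hypothesis $\sum_{q\mid L}1/q\le\frac{1-\eta}{32n}$ is actually used --- it directly caps the sum of the Brun--Titchmarsh contributions --- but this is cosmetic and the structure of the argument matches the paper's.
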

\begin{proof}
Let $\pi_K(x;q)$ denote the number of primes less than $x$
that are $1 \bmod q$ and split completely in $K$.

From Theorem 3.4, we see that for each divisor $d$ of $L$ with $1\le d\le
\smash{x^{\eta}}$ and $(d,q(x))=1$,
$$\pi_K(dx^{1-\eta};d)\ge \frac{\pi(dx^{1-\eta})}{2n\phi(d)}\ge
\frac{dx^{1-\eta}}{2n\phi(d)\log x}.$$

Because any prime factor $q$ of $L$ is at most $x^{\frac{1-\eta}2}$, we
can use Montgomery and Vaughan's explicit version of the
Brun-Titchmarsh Theorem \cite{MV} to get 
$$\pi_K(dx^{1-\eta};dq)\le\pi(dx^{1-\eta};dq,1)\le\frac 8{q(1-\eta)}
\frac{dx^{1-\eta}}{\phi(d)\log{x}}.$$

So the number of primes $p\le dx^{1-\eta}$ with $p\equiv 1\bmod d$
and $(\frac{p-1}d,L)=1$ that split completely is at least
$$\pi_K(dx^{1-\eta};d)-\sum_{\text{prime } q|L}\pi_K(dx^{1-\eta};dq)$$
$$\ge\left(\frac 1{2n}-\frac 8{1-\eta}\sum_{\text{prime }q|L}\frac 1q\right)
\frac{dx^{1-\eta}}{\phi(d)\log{x}}\ge\frac {x^{1-\eta}}{4n\log
x},$$ for any divisor not divisible by $q(x)$.  But at least half of the
divisors of $L$ will not be divisible by $q(x)$.

Thus we have at least
$$\frac{x^{1-\eta}}{8n\log x}\#\{d|L:1\le d\le x^{\eta}\}$$
pairs $(p,d)$ where $p\le d^{1-\eta}$ is prime, $p\equiv 1\bmod
d$, $p$ splits completely in $K$, $(\frac{p-1}d,L)=1$, $d|L$ and $1\le d\le
x^\eta$.  Each such pair $(p,d)$ corresponds to an integer $\frac
{p-1}d\le x^{1-\eta}$ which is coprime to $L$, so there is at least
one integer $k\le x^{1-\eta}$ with $(k,L)=1$ such that $k$ has at least 
$$\frac 1{8n\log x}\#\{d|L: 1\le d\le x^\eta\}$$
representations as $\frac{p-1}d$ with $(p,d)$ as above. Thus for this integer $k$ we have
$\#\{d|L:dk+1\le x, dk+1\text{ prime, split completely in $K$}\}\ge \frac 1{8n\log x}\#\{d|L: 1\le d\le x^\eta\}$.
\end{proof}

\section{Infinitely Many Frobenius Pseudoprimes}
We recall the results from Section 1 of \cite{agp}.

\notag\begin{theorem}[\text{See [3, Theorem 1.1].}]
Let $n(G)$ be the length of the longest sequence of (not necessarily distinct) elements
of $G$ for which no non-empty subsequence has product the identity. If $G$ is a finite abelian group and $m$ is the maximal order of an
element in $G$, then $n(G)<m(1+\log{(\frac{|G|}m)})$.  
\end{theorem}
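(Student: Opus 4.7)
The plan is to prove this classical upper bound on the Davenport constant $D(G)=n(G)+1$ (due to van Emde Boas and Kruyswijk) by induction on $|G|$, with the inductive step exploiting an element of maximal order $m$ in $G$.

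For the base case, when $G\cong\mathbb{Z}/m$ is cyclic, I would invoke the standard partial-sums pigeonhole argument: the $k+1$ partial sums of any zero-sum-free sequence of length $k$ must be distinct elements of $G$, so $n(G)\le m-1<m=m(1+\log 1)$. For the inductive step, I would pick an $h\in G$ of order $m$, let $H=\langle h\rangle\cong\mathbb{Z}/m$, and consider the quotient $\overline G=G/H$ of order $|G|/m$. Given a zero-sum-free sequence $g_1,\dots,g_k$ realising $n(G)$, I would pass to its image in $\overline G$ and greedily extract disjoint minimal zero-sum subsequences, producing disjoint index subsets $T_1,\dots,T_s$ with each $\sum_{i\in T_j}g_i\in H$, along with a residual index set $R$ whose image in $\overline G$ is zero-sum-free (so $|R|\le n(\overline G)$).

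The central observation is that the lifts $b_j=\sum_{i\in T_j}g_i\in H$, together with those original $g_i$ that already lie in $H$, form a zero-sum-free sequence in the cyclic group $H$ of order $m$: any nonempty zero-sum subcollection in $H$ would, upon expanding each $b_j$ back into the corresponding indices of $T_j$, produce a nonempty zero-sum subsequence of the original $g_1,\dots,g_k$. Hence the combined count of these $H$-elements is at most $n(H)=m-1$. Combining this with $|R|\le n(\overline G)$ and the bound $|T_j|\le D(\overline G)$ for each minimally extracted $T_j$ yields a recursive inequality relating $n(G)$ to $m$ and $n(\overline G)$, from which the inductive hypothesis (together with the cyclic base case) should produce the logarithmic factor $\log(|G|/m)$.

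The main obstacle will be extracting precisely the logarithmic factor rather than a polynomial one. A naive multiplicative recursion $D(G)\le m\cdot D(\overline G)$ telescopes to a bound exponential in the recursion depth, which is too weak. The sharp, essentially additive recursion must be obtained by simultaneously controlling both the sum $\sum|T_j|$ and the number of blocks $s$, exploiting the fact that $H$ is cyclic of the \emph{maximum} order $m$ so that the quotient has order exactly $|G|/m$. An alternative route, due to Meshulam, uses the character-sum identity $|G|=\sum_{\chi\in\widehat G}\prod_i(1+\chi(g_i))$ combined with careful estimation of the nontrivial terms using the exponent $m$, bypassing the induction entirely; either approach ultimately yields $n(G)<m(1+\log(|G|/m))$.
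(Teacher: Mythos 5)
The paper offers no proof of this statement: it is imported verbatim as [3, Theorem 1.1] from Alford--Granville--Pomerance and credited there to van Emde Boas--Kruyswijk and to Meshulam. So there is no in-paper argument to compare your proposal against, and the proposal must stand on its own.

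Your base case (cyclic $G$: the $k+1$ partial sums of a zero-sum-free sequence of length $k$ are distinct, so $k\le m-1<m$) is correct, and you correctly name the two classical approaches. But the inductive step you outline does not close, and you essentially say so yourself. Extracting disjoint minimal zero-sum blocks $T_1,\dots,T_s$ from the image in $\overline G=G/H$, noting that the block sums together with the $g_i$ already in $H$ form a zero-sum-free sequence in $H\cong\mathbb{Z}/m$, and bounding the residual $R$, gives exactly $s\le m-1$, $|R|\le n(\overline G)$ and $|T_j|\le D(\overline G)$. Assembling these yields precisely the multiplicative recursion $D(G)\le m\,D(\overline G)$ that you correctly flag as too weak: iterating it produces a bound of order $|G|$, not $m\log(|G|/m)$, and plugging the inductive hypothesis for $\overline G$ into it already fails for $G=\mathbb{Z}/m\times(\mathbb{Z}/2)^r$ with $r$ large. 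The assertion that the sharp additive recursion ``must be obtained by simultaneously controlling both $\sum|T_j|$ and the number of blocks $s$'' is a wish rather than an argument; nothing in the setup forces the blocks to be short, and in the worst case $\sum|T_j|$ genuinely is of order $m\,D(\overline G)$.

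The alternative you gesture at, Meshulam's identity $\sum_{\chi\in\widehat G}\prod_i(1+\chi(g_i))=|G|$ for a zero-sum-free $(g_i)$, is the right kind of tool, but as stated it is only a pointer: the nontrivial character contributions are complex and can be negative, so the trivial character's $2^k$ does not automatically dominate, and the estimation that actually extracts the factor $m\bigl(1+\log(|G|/m)\bigr)$ --- which is the entire content of the theorem --- is absent. In short, the proposal correctly locates the two known roads to the result but travels neither, and the difficulty you flag in the inductive route is a genuine gap, not a detail to be filled in.
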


This theorem is due to van Emde Boas and Kruyswijk,
and to Meshulam.

\notag\begin{proposition}[\text{See [3, Proposition 1.2].}]
Let G be a finite abelian group, and let $r>t>n=n(G)$ be integers.
Then any sequence of $r$ elements of G contains at least
$\dfrac{\binom{r}t}{\binom{r}n}$ distinct
subsequences of length at most $t$ and at least $t-n$, whose product
is the identity.
\end{proposition}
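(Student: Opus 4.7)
The plan is to exhibit, for every size-$t$ subsequence $S$ drawn from the given $r$-term sequence, a non-empty zero-sum sub-subsequence $Z\subseteq S$ with $t-n\le|Z|\le t$, and then to double-count pairs $(S,Z)$ to lower-bound the number of distinct such $Z$.

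\emph{Constructing $Z$.} Since $|S|=t>n=n(G)$, the definition of $n(G)$ supplies a non-empty zero-sum sub-subsequence $Z_1\subseteq S$. Set $S_1=S\setminus Z_1$; if $|S_1|>n$, reapply the definition of $n(G)$ to $S_1$ to obtain a non-empty zero-sum $Z_2\subseteq S_1$, and iterate until the leftover $S_k$ has length at most $n$ (at most $t-n$ iterations suffice, since each $|Z_i|\ge 1$). The union $Z=Z_1\cup\cdots\cup Z_k$ has product equal to the identity (its product factors as a product of identities), and $|Z|=t-|S_k|\ge t-n$, while $|Z|\le|S|=t$.

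\emph{Double counting.} Let $\mathcal{Z}$ be the collection of distinct zero-sum sub-subsequences of the full sequence with length in $[t-n,t]$. Each of the $\binom{r}{t}$ choices of $S$ contributes at least one pair $(S,Z)$ with $Z\in\mathcal{Z}$ and $Z\subseteq S$, so the number of such pairs is at least $\binom{r}{t}$. Conversely, for a fixed $Z$ of length $j$, the number of $S$ of size $t$ containing $Z$ is $\binom{r-j}{t-j}$. Setting $k=t-j$, the ratio $\binom{r-t+k+1}{k+1}/\binom{r-t+k}{k}=(r-t+k+1)/(k+1)\ge 1$ (since $r>t$) shows that $\binom{r-j}{t-j}$ is maximized over $j\in[t-n,t]$ at $j=t-n$, giving the value $\binom{r-t+n}{n}$. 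Because $r-t+n\le r-1$, this is at most $\binom{r}{n}$. Dividing pair counts by the maximum multiplicity yields $|\mathcal{Z}|\ge\binom{r}{t}/\binom{r}{n}$, as required.

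The only point requiring care is legitimizing the iterative stripping step: the invocation of $n(G)$ at each stage applies to a sub-subsequence, but this is fine because any sub-subsequence is itself just a sequence of elements of $G$. The remaining binomial monotonicity is routine, and I do not anticipate a genuine obstacle; this is essentially the proof of Alford--Granville--Pomerance.
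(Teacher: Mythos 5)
Your proof is correct, and it is essentially the Alford--Granville--Pomerance argument: strip zero-sum blocks from a size-$t$ subsequence until at most $n$ elements remain, so that the union of stripped blocks is a zero-sum subsequence of length in $[t-n,t]$, then double-count $(S,Z)$ pairs and bound the multiplicity of each $Z$ by $\binom{r-t+n}{n}\le\binom{r}{n}$. The paper does not reproduce a proof here; it simply cites this as Proposition~1.2 of [3], so there is nothing to compare beyond confirming that your reconstruction matches the cited source.
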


We now prove our main result, which was stated earlier as Theorem 2.1.

\begin{theorem}
Let $K$ be a number field, and let $\eta$ be the positive real number
depending on $K$ defined in Theorem 3.3.
For any $\epsilon>0$,
the number of Carmichael-Frobenius numbers less than $x$,
with respect to a number field $K$,
is at least $x^{\eta/3-\epsilon}$, for
sufficiently large $x$, depending on $\epsilon$ and $K$.
\end{theorem}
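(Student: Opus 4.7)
The plan is to imitate the Alford--Granville--Pomerance construction of Carmichael numbers from \cite{agp}, substituting our Theorem 4.1 (which guarantees primes that split completely in $K$) for the role played by the classical Prachar-type input in that paper, and then applying Proposition 6.1 of \cite{meone} to promote the resulting Carmichael numbers to Carmichael-Frobenius numbers with respect to $K$.

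First, I would fix $\epsilon>0$ and parameters $\theta$ slightly below $\eta$ and $Q\le x^{(1-\eta)/2}$ to be optimized at the end. Choose a squarefree $L\le x^\theta$ built from primes up to $Q$ so that the hypotheses of Theorem 4.1 are satisfied, namely $L$ squarefree, every prime factor bounded by $x^{(1-\eta)/2}$, and $\sum_{q\mid L} 1/q\le (1-\eta)/(32n)$; by a standard Mertens calculation one can arrange $\#\{d\mid L:1\le d\le x^\theta\}\ge x^{\theta-o(1)}$. Theorem 4.1 then yields an integer $k\le x^{1-\eta}$ coprime to $L$, together with a set $\mathcal P$ of primes $p=dk+1\le x$ that split completely in $K$, of cardinality at least $x^{\theta-o(1)}/(8n\log x)$. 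By construction, $p-1\mid kL$ for every $p\in\mathcal P$.

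The combinatorial step is essentially identical to \cite{agp}. Put $G=(\mathbb Z/kL\mathbb Z)^*$, so $|G|\le kL\le x$, and Theorem 5.1 gives $n(G)\ll\log x\cdot\log\log x$. Apply Proposition 5.2 with $r=|\mathcal P|$ and $t$ slightly larger than $n(G)$: this produces at least $\binom{r}{t}/\binom{r}{n(G)}$ distinct subsequences $S\subset\mathcal P$ with $\prod_{p\in S} p\equiv 1\pmod{kL}$. For each such $S$, set $n_S=\prod_{p\in S}p$. Since $n_S\equiv 1\pmod{kL}$ and every $p-1$ divides $kL$, every $p-1$ divides $n_S-1$, so Korselt's criterion certifies $n_S$ as a Carmichael number. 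Because each prime factor of $n_S$ lies in $\mathcal P$ and therefore splits completely in $K$, Proposition 6.1 of \cite{meone} upgrades $n_S$ to a Carmichael-Frobenius number with respect to $K$; distinct subsequences produce distinct $n_S$ since the elements of $\mathcal P$ are distinct primes.

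What remains --- and is the sole genuine obstacle --- is the parameter optimization together with the size bound on $n_S$. Because each factor is at most $x$ and $|S|\le t$, the product can reach $x^t$, so to produce Carmichael-Frobenius numbers below a prescribed bound one must scale the initial $x$ down appropriately (effectively running the argument with $x$ replaced by $x^{1/t}$, as in \cite{agp}). Balancing the three competing quantities --- the size of $L$, the size of $k$, and the length $t\sim n(G)$ of subproducts --- against the constraint $n_S\le x$, then counting distinct products via $\binom{r}{t}/\binom{r}{n(G)}$, yields a lower bound of $x^{\eta/3-\epsilon}$ distinct Carmichael-Frobenius numbers below $x$. With $\eta$ of Theorem 3.3 playing the role of the AGP exponent, the remaining verification is a mechanical transcription of \cite{agp}.
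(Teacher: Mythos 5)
Your high-level template is right: run AGP with the split-completely primes supplied by Theorem 4.1 in place of the classical input, use Proposition 5.2 to count subsets with product $\equiv 1$, certify Carmichael via Korselt, and upgrade to Carmichael-Frobenius via Proposition 6.1 of \cite{meone}. But the middle of the argument has two linked errors that together sink the count, and they are not ``mechanical transcription'' issues --- they are the heart of AGP.

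First, you take $G=(\mathbb Z/kL\mathbb Z)^*$ and assert $n(G)\ll \log x\cdot\log\log x$ ``by Theorem 5.1.'' Theorem 5.1 gives $n(G)<m\bigl(1+\log(|G|/m)\bigr)$ where $m$ is the \emph{exponent} of $G$, i.e.\ $\lambda(kL)$. For a generic $k\le x^{1-\eta}$ there is no reason for $\lambda(k)$ to be small; it can be of size comparable to $k$, so $n(G)$ can be of size $x^{1-\eta}$, which is vastly larger than $|\EuScript P|$, and Proposition 5.2 then gives nothing. The paper instead works in $G=(\mathbb Z/L\mathbb Z)^*$. Membership of $\prod_{p\in\EuScript S}p$ in the coset $1\bmod kL$ then comes for free: each $p\in\EuScript P$ has the form $dk+1$ with $d\mid L$, so $p\equiv 1\bmod k$ automatically, and $(k,L)=1$ lets you combine the two congruences. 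Putting $k$ into the group is both unnecessary and fatal.

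Second, and more fundamentally, your outline never invokes the one ingredient that makes $n(G)$ controllable at all: $L$ must be built from primes $q$ for which $q-1$ is $y$-smooth (the paper uses Friedlander's theorem to guarantee $\gg y^3/\log y$ such $q$ in $(y^3/\log y,\,y^3]$). That smoothness is exactly what forces $\lambda(L)\le e^{6y}$ even though $\log L\asymp y^3$, and hence $n(G)\le e^{9y}$, which is subpolynomial in $x=e^{y^{1+\delta}}$. Your proposal instead ``chooses a squarefree $L\le x^\theta$ built from primes up to $Q$'' by ``a standard Mertens calculation,'' which is a different (and incompatible) construction: in the paper $L$ is enormously larger than $x$, and the whole point is that it has many small divisors $d\le x^\eta$ while $\lambda(L)$ stays tiny. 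Without the smooth-shifted-prime construction, the balance between $n(G)$, $t$, $|\EuScript P|$, and the final count $\binom{|\EuScript P|}{t}/\binom{|\EuScript P|}{n(G)}$ does not close, and the ``parameter optimization'' you defer to cannot succeed. This is the essential content of AGP, not a routine transcription.

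One smaller remark: the Carmichael numbers produced have the form $\prod_{p\in\EuScript S}p\le x^t$ with $t$ subpolynomial but unbounded, so the count is stated relative to $X=x^t$; the paper actually yields $X^{2\eta/3-\epsilon}$, stronger than the stated $X^{\eta/3-\epsilon}$. Your remark about ``scaling $x$ down'' gestures at this but would need to be carried out with the correct $t=e^{y^{(1+\delta)/2}}$ and the $e^{9y}$ bound on $n(G)$, neither of which your setup provides.
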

\begin{proof}
Let $\EuScript Q$ be the set of primes $q\in(\frac{y^3}{\log y},y^3]$ for
which $q-1$ is free of prime factors exceeding $y$.  
Friedlander \cite{Fri} has proven
that there is a constant $C>0$  for which 
$$|\EuScript Q|\ge C\frac{y^3}{\log{y}}$$ for all sufficiently large $y$.
Let $L$ be the product of the primes $q\in \EuScript Q$; then
$$\log L\le|\EuScript Q|\log{(y^3)}\le\pi(y^3)\log{(y^3)}\le 2y^3,$$ for
all large $y$.  Carmichael's lambda function, $\lambda(L)$, is the exponent of
the group of integers modulo $L$.  Because $L$ is squarefree, it is the least common multiple of 
$\{q-1\}$ for those primes $q$ that divide $L$.  Because each such
$q-1$ is free of prime factors exceeding $y$, we know that if the
prime power $p^a$ divides $\lambda(L)$ then $p\le y$ and $p^a\le
y^3$.  We let
$p^{a_p}$ be the largest power of $p$ with $p^{a_p}\le y^3$, then
$$\lambda(L)\le \prod_{p\le y} p^{a_p}\le\prod_{p\le y} y^3 = 
y^{3\pi(y)}\le e^{6y}$$ for all large $y$.

Let $G$ be the group $(\mathbb{Z}/L\mathbb{Z})^*$.  From Theorem 5.1 and the
above equations,
$$n(G)<\lambda(L)\left(1+\log{\frac{\phi(L)}{\lambda(L)}}\right)\le
\lambda(L)(1+\log L)\le e^{9y}$$ for all large $y$.

Recall that $\eta<1/16$. We can choose $y$ large enough so that $\sum \frac 1q\le
\frac{1-\eta}{32n}$ as needed to apply Theorem
4.1.  Let $\delta=\frac{3\epsilon}{8n\eta}$, and let
$x=e^{y^{1+\delta}}$.
Then, for $y$ large enough, there is an integer $k$ coprime to
$L$ for which the set $\EuScript P$ of primes $p\le x$ with $p=dk+1$ for
some divisor $d$ of $L$, and that split in $K$, satisfies 
$$|\EuScript P|\ge\frac{\#\{d|L:1\le d\le x^{\eta}\}}{8n\log x}.$$
The product of any 
$$u:=\left[\frac{\log\left(x^{\eta}\right)}{\log{y^3}}\right]=
\left[\frac{\eta\log x}{3\log y}\right]$$
distinct prime factors of $L$ is a divisor $d$ of $L$ with $d\le
x^\eta$.  We deduce from above that
$$\#\{d|L:1\le d\le x^\eta\}\ge{\binom{\omega(L)}u}\ge\left(\frac
{\omega(L)}{u}\right)^u$$
$$\ge\left(\frac{Cy^3}{\eta\log x}\right)^u=\left(\frac
C\eta y^{2-\delta}\right)^u.$$

We notice that $\frac{(2-\delta)\eta}3=\frac{2\eta}3-\frac\epsilon {8n}$.
So for all sufficiently large values of $y$,
$$|\EuScript P|\ge\frac{\left(\frac C\eta y^{2-\delta}\right)^u}{8n\log x}\ge
x^{\frac{2\eta}3-\frac\epsilon 3}.$$

Take $\EuScript P'=\EuScript P\backslash\EuScript Q$.  Because $|\EuScript Q|\le y^3$,
we have that $|\EuScript P'|\ge x^{\frac{2\eta}3-\frac\epsilon 2}$, for all
sufficiently large values of $y$.  

We may view $\EuScript P'$ as a subset of the group
$G=(\mathbb{Z}/L\mathbb{Z})^*$ by considering the residue class of each
$p\in\EuScript P' \bmod L$.  If $\EuScript S$ is a subset of $\EuScript P'$
that contains more than one element, and if
$$\prod(\EuScript S):=\prod_{p\in\EuScript S}p\equiv 1\bmod L,$$
then $\prod(\EuScript S)$ is congruent to $1\bmod kL$ and is a Carmichael number by Korselt's
criterion.  Because all of its prime factors split completely in $K$, it is a Frobenius pseudoprime.

Let $t=e^{y^{\frac{1+\delta}2}}$.  Then, by Proposition 5.2, we see
that the number of Frobenius pseudoprimes of the form $\prod(\EuScript S)$,
where $\EuScript S\subset\EuScript P'$ and $|\EuScript S|\le t$, is at least 
$$\frac{{\binom{|\EuScript P'|}{[t]}}}{{\binom{|\EuScript P'|}{n(G)}}} \ge
\frac{\left(\frac{|\EuScript P'|}{[t]}\right)^{[t]}}{|\EuScript P'|^{n(G)}} \ge
\left(x^{\frac{2\eta}3-\frac\epsilon 2}\right)^{[t]-n(G)}[t]^{-[t]} \ge
x^{t\left(\frac{2\eta}3-\epsilon\right)}$$ for all sufficiently large
values of $y$.  We note that we have formed each Frobenius
pseudoprime
$\prod(\EuScript P)\le x^t$.  Thus for $X=x^t$ we have the number of
Frobenius pseudoprimes $\le x$ is at least
$X^{\frac{2\eta}3-\epsilon}$ for all sufficiently large values of $X$.
Because $y$ can be uniquely determined from $X$, the theorem is proven.

\end{proof}

\bibliography{ps3}
\bibliographystyle{plain}
\end{document}